 \newtheoremstyle{mytheorem}
 {3pt}
 {3pt}
 {\slshape}
 {}
 {\bfseries}
 {.}
 { }
 {}
\numberwithin{equation}{section}
\theoremstyle{theorem}
\newtheorem{theorem}{Theorem}[section]
\newtheorem{lemma}[theorem]{Lemma}
\theoremstyle{definition}
\newcommand{\Keywords}[1]{\ifthenelse{\isempty{#1}}{}{\smallskip \smallskip \noindent \textbf{Keywords}. #1}}
\newcommand{\MSC}[2][2010]{\ifthenelse{\isempty{#2}}{}{\smallskip \smallskip \noindent \textbf{#1MSC}. #2}}
\newcommand{\abstractnote}[1]{\ifthenelse{\isempty{#1}}{}{\smallskip \smallskip \noindent \textsuperscript{\dag}#1}}
\def\specialsection{\@startsection{section}{1}%
  \z@{\linespacing\@plus\linespacing}{.5\linespacing}%
  {\normalfont}}
\def\section{\@startsection{section}{1}%
  \z@{.7\linespacing\@plus\linespacing}{.5\linespacing}%
  {\normalfont\scshape}}
\patchcmd{\@settitle}{\uppercasenonmath\@title}{\Large\boldmath}{}{}
\patchcmd{\@settitle}{\begin{center}}{\begin{flushleft}}{}{}
\patchcmd{\@settitle}{\end{center}}{\end{flushleft}}{}{}
\patchcmd{\@setauthors}{\MakeUppercase}{\normalsize}{}{}
\patchcmd{\@setauthors}{\centering}{\raggedright}{}{}
\patchcmd{\section}{\scshape}{\large\bfseries\boldmath}{}{}
\patchcmd{\subsection}{\bfseries}{\bfseries\boldmath}{}{}
\renewcommand{\@secnumfont}{\bfseries}
\patchcmd{\@startsection}{\@afterindenttrue}{\@afterindentfalse}{}{}
\patchcmd{\abstract}{\leftmargin3pc}{\leftmargin1pc}{}{}
\def\maketitle{\par
  \@topnum\z@ 
  \@setcopyright
  \thispagestyle{empty}
  \ifx\@empty\shortauthors \let\shortauthors\shorttitle
  \else \andify\shortauthors
  \fi
  \@maketitle@hook
  \begingroup
  \@maketitle
  \toks@\@xp{\shortauthors}\@temptokena\@xp{\shorttitle}%
  \toks4{\def\\{ \ignorespaces}}
  \edef\@tempa{%
    \@nx\markboth{\the\toks4
      \@nx\MakeUppercase{\the\toks@}}{\the\@temptokena}}%
  \@tempa
  \endgroup
  \c@footnote\z@
  \@cleartopmattertags
}
\newcommand{\pa}{\mathit{pa}}
\newcommand{\pac}{\mathcal{PA}}
\title[Unlimited parity alternating partitions]{Unlimited parity alternating partitions}
\author[S. Chern]{Shane Chern}
\address{Department of Mathematics, The Pennsylvania State University, University Park, PA 16802, USA}
\email{shanechern@psu.edu}
\date{}
\begin{document}

%

\maketitle

\begin{abstract}
We introduce a new type of partitions that consists of partitions whose different parts alternate in parity (e.g., $3+2+2+1+1$). Various properties of this partition function are studied. In particular, we obtain its asymptotic behavior by employing Ingham's Tauberian theorem.

\Keywords{Partitions, unlimited parity alternating partitions, asymptotics, Ingham's Tauberian theorem.}

\MSC{05A17, 11P82.}
\end{abstract}

\section{Introduction}

A \textit{partition} of a positive integer $n$ is a weakly decreasing sequence of positive integers whose sum is $n$. Let $p(n)$ count the number of partitions of $n$. It is well known that
$$1+\sum_{n\ge 1}p(n)q^n=\frac{1}{(q;q)_\infty}.$$
Here and in the sequel, we adopt the standard $q$-series notations:
\begin{align*}
(a;q)_n&:=\prod_{k=0}^{n-1}(1-aq^{k}),\\
(a;q)_{\infty}&:=\prod_{k= 0}^\infty (1-aq^{k}),\\
(a_1,a_2,\cdots,a_m;q)_n&:=(a_1;q)_n(a_2;q)_n\cdots(a_m;q)_n,\\
(a_1,a_2,\cdots,a_m;q)_\infty&:=(a_1;q)_\infty(a_2;q)_\infty\cdots(a_m;q)_\infty.
\end{align*}

In recent years, many authors also studied partitions with further restrictions. One example is the \textit{parity alternating partitions} introduced by Andrews \cite{And1984}. Here a parity alternating partition is a partition in which the parts alternate in parity. If we further require the smallest part of parity alternating partitions to be odd, then Andrews showed that this type of partitions has generating function
$$\sum_{n\ge 0}\frac{q^{n(n+1)/2}}{(q^2;q^2)_n}.$$
Jang \cite{Jan2017} later studied the asymptotic behavior of the number of parity alternating partitions of $n$ with the smallest part odd (in her paper, this type of partitions is called \textit{odd-even partitions}), and proved that the number is asymptotic to
$$\frac{1}{2\sqrt{5}n^{\frac{3}{4}}}e^{\pi \sqrt{\frac{n}{5}}}$$
for sufficiently large $n$.

It is easy to observe that parity alternating partitions are distinct partitions (i.e.~all parts are distinct). Naturally, we may study an unlimited version of parity alternating partitions. Here an \textit{unlimited parity alternating partition} is a partition whose different parts alternate in parity. In other words, we allow partitions like $3+2+2+1+1$. For example, $6$ has the following eight unlimited parity alternating partitions:
\begin{gather*}
6,\ 4+1+1,\ 3+3,\ 3+2+1,\ 2+2+2,\\
2+2+1+1,\ 2+1+1+1+1,\ 1+1+1+1+1+1.
\end{gather*}

Let $\pac$ be the set of unlimited parity alternating partitions. For a given positive integer $n$, we use $\pac(n)$ to denote the set of partitions of $n$ in $\pac$. We further write $\pa(n)=|\pac(n)|$, the number of unlimited parity alternating partitions of $n$. We remark that $\pa(n)$ is sequence A242110 in OEIS \cite{OEIS}.

The goal of this paper is to study various properties of $\pa(n)$. In particular, our core result is the following asymptotic formula:

\begin{theorem}\label{th:main}
We have, as $n\to\infty$,
\begin{equation}
\pa(n)\sim \frac{\sqrt{A}}{2\pi n}e^{2\sqrt{An}},
\end{equation}
where
$$A=\frac{\pi^2}{12}+2\left(\log \frac{1+\sqrt{5}}{2}\right)^2.$$
\end{theorem}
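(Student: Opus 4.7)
The plan is to derive a clean form for the generating function $F(q) := \sum_{n \ge 0} \pa(n)\,q^{n}$, extract its singular behaviour as $q = e^{-t} \to 1^{-}$, and finish with Ingham's Tauberian theorem. First I would conjugate: an unlimited parity alternating partition of $n$ corresponds bijectively to a partition of $n$ whose distinct parts all have odd multiplicity except possibly the largest, whose multiplicity may be arbitrary. Conditioning on that largest distinct part $N$ gives
$$F(q) = 1 + \sum_{N \ge 1}\frac{q^{N}}{1 - q^{N}}P_{N}(q), \qquad P_{N}(q) := \prod_{j=1}^{N-1}\left(1 + \frac{q^{j}}{1 - q^{2j}}\right).$$
Exploiting the factorisation $1 + q^{j} - q^{2j} = (1 + \phi q^{j})(1 - \phi^{-1}q^{j})$ with $\phi = (1+\sqrt{5})/2$, I would rewrite $P_{N}(q) = (-\phi q;q)_{N-1}(\phi^{-1}q;q)_{N-1}/(q^{2};q^{2})_{N-1}$, which already announces the golden ratio as the source of the $\log\phi$ appearing in $A$.

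Next I would telescope. Since $P_{N+1} - P_{N} = \frac{q^{N}}{1 - q^{2N}}P_{N}$, the identity $\frac{q^{N}}{1 - q^{N}}P_{N} = (P_{N+1} - P_{N}) + \frac{q^{2N}}{1 - q^{2N}}P_{N}$ holds, and summing yields
$$F(q) = P_{\infty}(q) + \sum_{N \ge 1}\frac{q^{2N}}{1 - q^{2N}}P_{N}(q), \qquad P_{\infty}(q) := \frac{(-\phi q;q)_{\infty}(\phi^{-1}q;q)_{\infty}}{(q^{2};q^{2})_{\infty}}.$$
I would then set $q = e^{-t}$ and apply the standard dilogarithmic asymptotic $\log (\alpha q;q)_{\infty} = -\mathrm{Li}_{2}(\alpha)/t - \tfrac{1}{2}\log(1 - \alpha) + O(t)$, together with the Ramanujan values $\mathrm{Li}_{2}(\phi^{-1}) = \pi^{2}/10 - (\log \phi)^{2}$ and $\mathrm{Li}_{2}(-\phi) = -\pi^{2}/10 - (\log \phi)^{2}$. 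The prefactors $(1 - \phi^{-1})^{-1/2} = \phi$ and $(1 + \phi)^{-1/2} = \phi^{-1}$ then cancel, and combining with the Dedekind-type asymptotic $(q^{2};q^{2})_{\infty} \sim \sqrt{\pi/t}\exp(-\pi^{2}/(12t))$ produces
$$P_{\infty}(e^{-t}) \sim \sqrt{t/\pi}\,\exp\!\left(\frac{A}{t}\right), \qquad A = \frac{\pi^{2}}{12} + 2(\log \phi)^{2},$$
matching the constant in the theorem.

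To conclude, I would show that the residual sum $\sum_{N \ge 1}(q^{2N}/(1 - q^{2N}))P_{N}(q) = o(P_{\infty}(q))$ as $t \to 0^{+}$; once this is done, $F(e^{-t}) \sim \sqrt{t/\pi}\exp(A/t)$, and Ingham's Tauberian theorem (with $\lambda = 1/\sqrt{\pi}$ and $\alpha = 1/2$) immediately delivers $\pa(n) \sim \sqrt{A}/(2\pi n) \cdot e^{2\sqrt{An}}$. This error estimate is the principal obstacle: the trivial bound $P_{N} \le P_{\infty}$ only yields $O(P_{\infty}\log(1/t)/t)$, which in fact \emph{exceeds} $P_{\infty}$. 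I would instead invoke the refined estimate $P_{N}/P_{\infty} \asymp \exp(-g(Nt)/t)$ with $g(y) := \int_{y}^{\infty}\log(1 + e^{-s}/(1 - e^{-2s}))\,ds$; because $g(y) \sim e^{-y}$ for large $y$ while $g(0)$ is a positive constant, the weight $e^{-g(Nt)/t}$ is doubly exponentially small for $Nt$ well below $\log(1/t)$, precisely the scale at which $1/(e^{2Nt} - 1) = O(t^{2})$. A careful split of the sum near $N \sim \log(1/t)/t$ then bounds the error by $O(P_{\infty} \cdot t)$ and closes the argument.
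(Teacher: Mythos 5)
Your analytic core is sound and takes a genuinely different route from the paper. The paper applies Heine's second transformation to $\sum_{N}\frac{q^N}{1-q^N}P_N(q)$, which converts the generating function \emph{exactly} into $\prod_{k}\frac{1+q^k-q^{2k}}{1-q^{2k}}$ times a factor that tends to an explicit constant as $q\to 1^-$, and then quotes Auluck--Singwi--Agarwala for the asymptotic of the infinite product. You instead telescope, isolate $P_\infty$ as the main term, and must prove the residual sum is $o(P_\infty)$ by hand --- precisely the work that Heine's identity does for free. Your sketch of that estimate is correct in outline: the two-sided Euler--Maclaurin bound $e^{-g((N-1)t)/t}\le P_N/P_\infty\le e^{-g(Nt)/t}$ (valid since the factors are decreasing in $j$), together with $g(y)\asymp e^{-y}$ for large $y$ and the substitution $v=e^{-y}$, does yield $\sum_N e^{-g(Nt)/t}/(e^{2Nt}-1)=O(t)$; and your dilogarithm evaluation of $P_\infty$, including the Landen values and the cancellation $(1-\phi^{-1})^{-1/2}(1+\phi)^{-1/2}=1$, reproduces the paper's constant $A$. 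One technical caveat: for $\alpha=-\phi$ the expansion $\log(\alpha q;q)_\infty=-\mathrm{Li}_2(\alpha)/t+\cdots$ cannot be obtained by termwise expansion of $\log(1+\phi q^k)$, since $\phi q^k>1$ for small $k$ when $q$ is near $1$; you need Euler--Maclaurin or a Mellin-transform argument there, with $\mathrm{Li}_2(-\phi)$ understood as the analytic continuation.

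The one genuine gap is that you never verify the hypothesis of Ingham's Tauberian theorem that the coefficient sequence is weakly increasing. This is not a formality: the theorem fails without it (e.g., for series supported on even $n$), and monotonicity of $\pa(n)$ is not obvious --- the paper devotes a separate theorem to it, proved by a fairly intricate case-by-case injection $\pac(n)\to\pac(n+1)$. Your argument is incomplete until you either supply such a combinatorial proof or invoke a Tauberian theorem with weaker hypotheses.
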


\section{Monotonicity}

We start by listing the first 15 values of $\pa(n)$, which are shown in Table \ref{ta:pa}.

\begin{table}[ht]\caption{The first 15 values of $\pa(n)$}\label{ta:pa}
\centering
\begin{tabular}{|c|c||c|c||c|c|}
$n$ & $\pa(n)$ & $n$ & $\pa(n)$ & $n$ & $\pa(n)$\\
$1$ & $1$ & $6$ & $8$ & $11$ & $33$\\
$2$ & $2$ & $7$ & $11$ & $12$ & $39$\\
$3$ & $3$ & $8$ & $13$ & $13$ & $54$\\
$4$ & $4$ & $9$ & $21$ & $14$ & $63$\\
$5$ & $6$ & $10$ & $23$ & $15$ & $88$\\
\end{tabular}
\end{table}

From these values, one may expect that $\{\pa(n)\}_{n\ge 1}$ is a strictly increasing sequence. In fact, this observation is correct.

\begin{theorem}\label{th:inc}
$\{\pa(n)\}_{n\ge 1}$ is a strictly increasing sequence.
\end{theorem}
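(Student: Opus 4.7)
The plan is to prove Theorem~\ref{th:inc} by constructing, for each $n\ge 1$, an injection $\phi\colon \pac(n)\to\pac(n+1)$ whose image omits at least one element, which will give $\pa(n+1)\ge \pa(n)+1$. For $\lambda\in\pac(n)$ with distinct parts $d_1<\cdots<d_r$ and multiplicities $m_1,\ldots,m_r$, the map $\phi$ is defined by cases on the smallest part $s=d_1$. When $s=1$ or $s$ is even, I set $\phi(\lambda)=\lambda\cup\{1\}$: the appended $1$ (odd) is followed either by another $1$ (merely increasing its multiplicity) or by the old smallest $s$ (which is even), so the alternation of distinct parts is preserved, and the new element lies in $\pac(n+1)$.

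The substantive work concerns the case $s$ odd with $s\ge 3$, in which appending a $1$ would create two consecutive odd distinct parts $1,s$. I split this case by the internal structure of $\lambda$: (i) if $r=1$, replace one copy of $s$ by $s+1$; (ii) if $r\ge 2$ and $d_2=s+1$, remove one $s$ and increase the multiplicity of $d_2$ by one; (iii) if $r\ge 2$, $d_2\ge s+3$, and $m_1\ge 2$, writing $s=2m+1$, replace one copy of $s$ by $m+1$ copies of $2$, introducing a new smallest even distinct part; (iv) the remaining case $r\ge 2$, $d_2\ge s+3$, $m_1=1$ is the most delicate and calls for a modification tailored both to the gap $d_2-s$ and to the multiplicity pattern $m_2,\ldots,m_r$: for example, when $d_2=s+3$ and $m_2\ge 2$, replacing the lone $s$ together with one copy of $d_2$ by two copies of $s+2=d_2-1$ yields a valid UPAP; larger gaps or $m_2=1$ require inserting fresh alternating-parity distinct parts into the open interval $(s,d_2)$ together with a compensating adjustment of $d_2$.

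The main obstacle is sub-case (iv): the lone odd part $s$ is separated from $d_2$ by an odd gap of at least three, and one must design modifications that, across all admissible gaps and multiplicity patterns, always yield a UPAP of $n+1$ whose smallest part is at least $2$, so that the image of Case~3 stays disjoint from the ``add a $1$'' image of the easy cases. Once $\phi$ is built, injectivity follows because each local modification is uniquely invertible, and the smallest part of $\phi(\lambda)$ together with the resulting multiplicity pattern records which sub-case was applied. Finally, strictness is obtained by exhibiting an explicit UPAP of $n+1$ outside the image: if $n$ is even, the singleton $(n+1)$ is never produced (only (i) with $r=1$ outputs a singleton, and that would demand $\lambda=(n)$ with $n$ odd); if $n$ is odd, the partition $\bigl(2^{(n+1)/2}\bigr)$ is never produced (no sub-case outputs a UPAP whose sole distinct part is $2$). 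This yields $\pa(n+1)>\pa(n)$.
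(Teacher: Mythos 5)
Your overall strategy --- an injection $\pac(n)\hookrightarrow\pac(n+1)$ built by cases on the smallest part, together with an explicit partition of $n+1$ outside the image --- is exactly the paper's, and your easy cases (smallest part $1$ or even: append a $1$) coincide with the paper's first two bullets. Your sub-cases (i)--(iii) for an odd smallest part are also correct as stated. The problem is sub-case (iv), which you yourself flag as ``the most delicate'': it is not actually a construction. You resolve only the single instance $d_2=s+3$, $m_2\ge 2$, and for ``larger gaps or $m_2=1$'' you offer only the unspecified instruction to insert fresh alternating-parity parts into $(s,d_2)$ with ``a compensating adjustment of $d_2$.'' This is precisely where the difficulty of the whole theorem lives, and it is not clear such a scheme exists uniformly: for $\lambda=(8,3)$, say, one cannot place a single new part in the interval $(3,8)$ and repair both the parity alternation and the sum without disturbing other parts, and whatever fix is chosen must remain injective against the images of (i)--(iii) and against the other instances of (iv). Until this sub-case is pinned down explicitly, neither the injectivity claim nor the non-surjectivity witnesses can be verified, since both arguments quantify over all sub-cases.

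For comparison, the paper escapes this bottleneck by never trying to keep the lone smallest odd part in place: a smallest part $\lambda_\ell\ge 5$ odd is dissolved into a block of $3$'s (plus at most two $2$'s) according to $\lambda_\ell \bmod 3$, and the case $\lambda_\ell=3$ receives its own four-way analysis, one branch of which modifies the \emph{largest} part instead. Each image then carries a recognizable ``ending'' that encodes which sub-case produced it, which is how injectivity is checked. To complete your version you would need a comparably explicit and exhaustive recipe for sub-case (iv); the mod-$3$ dissolution device is one workable choice.
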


\begin{proof}
We may assume that $n>12$. For the remaining cases, one may check directly through Table \ref{ta:pa}.

The main idea of our proof is to find a injective map $\phi_{n}:\pac(n)\to\pac(n+1)$ for each $n$ such that $\phi_n(\pac(n))$ is a proper subset of $\pac(n+1)$. Let $\lambda\in\pac(n)$. We define the map $\phi_n$ as follows:
\begin{itemize}[leftmargin=\parindent]
\item If $\lambda=(\lambda_1,\ldots,\lambda_\ell)$ with $\lambda_\ell$ even, then
$$\phi_n(\lambda)=(\lambda_1,\ldots,\lambda_\ell,1).$$
Notice that $\phi_n(\lambda)$ ends with one $1$.
\item If $\lambda=(\lambda_1,\ldots,\lambda_\ell)$ with $\lambda_\ell=1$, then
$$\phi_n(\lambda)=(\lambda_1,\ldots,\lambda_\ell,1).$$
Notice that $\phi_n(\lambda)$ ends with at least two $1$'s.
\item If $\lambda=(\lambda_1,\ldots,\lambda_\ell)$ with $\lambda_{\ell-1}=\lambda_\ell\ge 5$ odd, then
$$\phi_n(\lambda)=(\lambda_1,\ldots,\lambda_{\ell-1},\underbrace{2,\ldots,2}_{(\lambda_\ell+1)/2}).$$
Notice that $\phi_n(\lambda)$ ends with an odd integer $\lambda_{\ell-1}\ge 5$ and at least three $2$'s.
\item If $\lambda=(\lambda_1,\ldots,\lambda_\ell)$ with $\lambda_{\ell-1}\ne\lambda_\ell$ and $\lambda_\ell\ge 5$ odd, then we study it into three cases:
\begin{enumerate}
\item If $\lambda_\ell\equiv -1 \pmod{3}$, then
$$\phi_n(\lambda)=(\lambda_1,\ldots,\lambda_{\ell-1},\underbrace{3,\ldots,3}_{(\lambda_\ell+1)/3}).$$
Notice that $\phi_n(\lambda)$ ends with an even integer $\lambda_{\ell-1}\ge 6$ and at least two $3$'s. (In this case, if $\lambda=(\lambda_1)$ has only one part, then $\phi_n(\lambda)$ has merely $3$ as its parts and there are at least two $3$'s.)
\item If $\lambda_\ell\equiv 0 \pmod{3}$, then
$$\phi_n(\lambda)=(\lambda_1,\ldots,\lambda_{\ell-1},\underbrace{3,\ldots,3}_{(\lambda_\ell-3)/3},2,2).$$
Notice that $\phi_n(\lambda)$ ends with two $2$'s and at least two $3$'s.
\item If $\lambda_\ell\equiv 1 \pmod{3}$, then
$$\phi_n(\lambda)=(\lambda_1,\ldots,\lambda_{\ell-1},\underbrace{3,\ldots,3}_{(\lambda_\ell-1)/3},2).$$
Notice that $\phi_n(\lambda)$ ends with one $2$ and at least two $3$'s.
\end{enumerate}
\item If $\lambda=(\lambda_1,\ldots,\lambda_\ell)$ with $\lambda_\ell=3$, then we study it into four cases:
\begin{enumerate}
\item If $\lambda=(\lambda_1,\ldots,\lambda_{\ell-4},3,3,3,3)$, then
$$\phi_n(\lambda)=(\lambda_1,\ldots,\lambda_{\ell-4},3,2,2,2,2,2).$$
Notice that $\phi_n(\lambda)$ ends with five $2$'s and at least one $3$.
\item If $\lambda=(\lambda_1,\ldots,\lambda_{\ell-3},3,3,3)$ with $\lambda_{\ell-3}\ne 3$, then
$$\phi_n(\lambda)=\begin{cases}
(\lambda_1,\ldots,\lambda_{\ell-3},5,5) & \text{if $4$ is not a part of $\lambda$},\\
(\lambda_1,\ldots,\lambda_{\ell-3},4,3,3) & \text{if $4$ is a part of $\lambda$}.
\end{cases}$$
Notice that $\phi_n(\lambda)$ ends with two $5$'s in the first case, and two $3$'s and at least two $4$'s in the second case.
\item If $\lambda=(\lambda_1,\ldots,\lambda_{\ell-2},3,3)$ with $\lambda_{\ell-2}\ne 3$, then
$$\phi_n(\lambda)=(\lambda_1,\ldots,\lambda_{\ell-2},3,2,2).$$
Notice that $\phi_n(\lambda)$ ends with two $2$'s and one $3$.
\item If $\lambda=(\lambda_1,\ldots,\lambda_{\ell-1},3)$ with $\lambda_{\ell-1}\ne 3$, then
$$\phi_n(\lambda)=\begin{cases}
(\lambda_1+1,\lambda_2,\ldots,\lambda_{\ell-1},3) & \text{if $\lambda_1=\lambda_2$},\\
(\lambda_1+4,\lambda_2,\ldots,\lambda_{\ell-1}) & \text{if $\lambda_1\ne\lambda_2$}.
\end{cases}$$
Notice that $\phi_n(\lambda)$ ends with one $3$ in the first case, and an even integer $\ge 4$ in the second case.
\end{enumerate}
\end{itemize}

One may check that for all $n> 12$, the map $\phi_n$ is one-to-one. To see $\pa(n)<\pa(n+1)$, we only need to find a partition in $\pac(n+1)$ with no pre-image under $\phi_n$. This is trivial as the partition $(2,2,\ldots,2)$ for even $n\ge 14$ and the partition $(3,2,2,\ldots,2)$ for odd $n\ge 15$ have no pre-image under $\phi_{n-1}$.

Hence $\{\pa(n)\}_{n\ge 1}$ is a strictly increasing sequence.
\end{proof}

\section{Generating function}

To study the generating function of $\pa(n)$, we first turn to partitions in which all different parts except for the largest one appear an odd number (or zero) times. Note that the conjugate of any partition in this partition set is in $\pac$ and vice versa. For example (here $\bar{\lambda}$ denotes the conjugate of $\lambda$),
\begin{align*}
&\lambda=(4,3,3,3,1),& \bar{\lambda}=(5,4,4,1)\in\pac;\\
&\lambda=(4,4,3,1,1,1),& \bar{\lambda}=(6,3,3,2)\in\pac.
\end{align*}

As a consequence, suppose that $p_o^*(n)$ counts the number of the aforementioned partitions of $n$, then
\begin{theorem}
We have
\begin{equation}
\pa(n)=p_o^*(n).
\end{equation}
\end{theorem}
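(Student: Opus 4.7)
The plan is to formalize the conjugation bijection already hinted at in the paragraph preceding the statement. Let $\lambda$ be a partition of $n$ with distinct parts $a_1>a_2>\cdots>a_k$ occurring with multiplicities $m_1,m_2,\ldots,m_k$ respectively. The first step is to describe the conjugate $\bar\lambda$ explicitly: from $\bar\lambda_j=\#\{i:\lambda_i\ge j\}$ one finds that $\bar\lambda_j$ is constant, equal to the partial sum $S_t:=m_1+m_2+\cdots+m_t$, on the interval $a_{t+1}<j\le a_t$ (with the convention $a_{k+1}:=0$). Consequently the distinct parts of $\bar\lambda$, listed in decreasing order, are exactly $S_k>S_{k-1}>\cdots>S_1$.

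Next I would check what it means for $\bar\lambda$ to be an unlimited parity alternating partition. By definition, $\bar\lambda\in\pac$ iff the sequence $(S_1,S_2,\ldots,S_k)$ alternates in parity. Since $S_{t+1}-S_t=m_{t+1}$, this is equivalent to requiring $m_2,m_3,\ldots,m_k$ to be odd, while $m_1$ is unrestricted. In the language of $\lambda$, this says that every distinct part of $\lambda$ except possibly the largest appears an odd number of times, which is precisely the defining condition for the partitions counted by $p_o^*(n)$.

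Finally, since conjugation is an involution on the set of partitions of $n$, the map $\lambda\mapsto\bar\lambda$ restricts to a bijection between the set counted by $p_o^*(n)$ and $\pac(n)$, giving $\pa(n)=p_o^*(n)$. There is no real obstacle in this argument; the only point that requires care is the clean translation between the multiplicities of $\lambda$ and the jump sizes of $\bar\lambda$, together with the verification that the resulting parity condition matches on both sides.
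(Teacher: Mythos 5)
Your proposal is correct and follows exactly the conjugation argument the paper uses (the paper merely asserts the bijection with two examples, while you supply the verification). The translation between multiplicities $m_t$ of $\lambda$ and the jumps $S_{t+1}-S_t$ of the distinct parts of $\bar\lambda$ is exactly right, including the exemption of $m_1$ corresponding to the largest part.
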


At last, we notice that the generating function of $p_o^*(n)$ is easy to write. This leads to
\begin{theorem}
We have
\begin{equation}
\sum_{n\ge 1}\pa(n)q^n=\sum_{n\ge 1}\frac{q^n}{1-q^{n}}\prod_{k=1}^{n-1}\left(1+\frac{q^k}{1-q^{2k}}\right).
\end{equation}
\end{theorem}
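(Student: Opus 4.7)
The plan is to deduce the generating function directly from the preceding theorem, which identifies $\pa(n)$ with $p_o^*(n)$. Since $p_o^*(n)$ counts partitions of $n$ in which every part strictly smaller than the largest part appears either an odd number of times or not at all (while the largest part itself carries no multiplicity restriction), I would stratify these partitions according to the value $m$ of the largest part.

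For a fixed largest part $m$, the multiplicities of the different parts are chosen independently, so the generating function factors as a finite product. The largest part $m$ must occur with some positive multiplicity, which contributes
$$q^m + q^{2m} + q^{3m} + \cdots = \frac{q^m}{1-q^m}.$$
Each candidate smaller part $k \in \{1,2,\ldots,m-1\}$ either does not appear or appears an odd number of times, contributing
$$1 + q^k + q^{3k} + q^{5k} + \cdots = 1 + \frac{q^k}{1-q^{2k}},$$
where the leading $1$ corresponds to $k$ being absent and the remaining geometric-type sum ranges over odd multiplicities. Multiplying these factors over $1\le k\le m-1$ and then summing over all possible largest parts $m\ge 1$ yields the right-hand side of the claimed identity.

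There is no substantive obstacle here once the previous theorem is available; the only point requiring any care is that the largest part is exempt from the odd-multiplicity rule, which is precisely what forces an outer sum (over the value of the largest part) rather than a single infinite product. After that, the identity reduces to a routine expansion of geometric series.
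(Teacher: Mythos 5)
Your argument is correct and is exactly the proof given in the paper: both stratify the partitions counted by $p_o^*(n)$ according to the largest part, assign the factor $q^m/(1-q^m)$ to that part and $1+q^k/(1-q^{2k})$ to each smaller candidate part, and sum over the largest part. No differences worth noting.
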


\begin{proof}
This directly comes from
\begin{align*}
\sum_{n\ge 1}\pa(n)q^n=\sum_{n\ge 1}p_o^*(n)q^n&=\sum_{n\ge 1}(q^n+q^{2n}+\cdots)\prod_{k=1}^{n-1}(1+q^k+q^{3k}+\cdots)\\
&=\sum_{n\ge 1}\frac{q^n}{1-q^{n}}\prod_{k=1}^{n-1}\left(1+\frac{q^k}{1-q^{2k}}\right).
\end{align*}
\end{proof}

\section{Asymptotic behavior}

\subsection{More about the generating function}

In order to study the asymptotic behavior of $\pa(n)$, we need to rewrite the generating function of $\pa(n)$ to make it easier to use Ingham's Tauberian theorem.

\begin{theorem}\label{th:gf1}
We have
\begin{equation}
\sum_{n\ge 1}\pa(n)q^n=\frac{3-\sqrt{5}}{2}\left(\prod_{k\ge 1}\frac{1+q^k-q^{2k}}{1-q^{2k}}\right)\left(1+\frac{3+\sqrt{5}}{2}\sum_{n\ge 1}\frac{\left(\frac{\sqrt{5}-1}{2}\right)^n}{1+\frac{\sqrt{5}+1}{2}q^n}\right)-2.
\end{equation}
\end{theorem}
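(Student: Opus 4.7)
The plan is to massage the single-sum generating function from the previous theorem into the stated closed form via two Abel summations separated by Heine's first transformation. Set $\phi = \frac{1+\sqrt{5}}{2}$, so that $\phi^{-1} = \frac{\sqrt{5}-1}{2}$ and $\phi^{\pm 2} = \frac{3\pm\sqrt{5}}{2}$; the key identities are the factorization $1 + q^k - q^{2k} = (1+\phi q^k)(1 - \phi^{-1} q^k)$ together with the Fibonacci relations $1 - \phi = -\phi^{-1}$ and $1 - \phi^{-1} = \phi^{-2}$. Write $A_n := \prod_{k=1}^{n} \frac{1 + q^k - q^{2k}}{1 - q^{2k}}$, so that $P(q) := \lim_{n\to\infty} A_n$ is the infinite product in the statement, and let $S(q) := \sum_{n \ge 1} \pa(n) q^n$.

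The first move exploits the fact that $\frac{q^n}{1-q^{2n}} A_{n-1} = A_n - A_{n-1}$ and $\frac{q^n}{1-q^n} = (1+q^n)\cdot\frac{q^n}{1-q^{2n}}$, so the formula from the preceding theorem rewrites as $S(q) = \sum_{n\ge 1}(1+q^n)(A_n - A_{n-1})$. Summation by parts with weight $1+q^n$ then collapses this to
$$(1-q) R(q) = S(q) - P(q) + 2, \qquad R(q) := \sum_{n \ge 0} q^n A_n,$$
so it suffices to find a closed form for $R(q)$.

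Using $(q^2; q^2)_n = (q;q)_n (-q;q)_n$ together with the factorization of $1+q^k-q^{2k}$, the series $R(q) = \sum_{n\ge 0} \frac{(-\phi q; q)_n (\phi^{-1} q; q)_n}{(-q;q)_n (q;q)_n} q^n$ is a ${}_2\phi_1$ in disguise. Heine's first transformation with $(a,b,c,z)=(-\phi q, \phi^{-1}q, -q, q)$ produces the prefactor $\frac{(\phi^{-1}q;q)_\infty(-\phi q^2;q)_\infty}{(q^2;q^2)_\infty} = \frac{P(q)}{1+\phi q}$, and after the telescoping simplification $\frac{(-\phi;q)_n}{(-\phi q^2;q)_n} = \frac{(1+\phi)(1+\phi q)}{(1+\phi q^n)(1+\phi q^{n+1})}$ we arrive at $R(q) = \phi^2 P(q)\sum_{n\ge 0}\frac{(\phi^{-1}q)^n}{(1+\phi q^n)(1+\phi q^{n+1})}$.

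The last move is to apply the partial fraction $\frac{1}{(1+\phi q^n)(1+\phi q^{n+1})} = \frac{1}{\phi q^n(q-1)}\bigl[\frac{1}{1+\phi q^n} - \frac{1}{1+\phi q^{n+1}}\bigr]$ and then perform a second Abel summation with weight $\phi^{-n}$; the index shift contributes precisely the factor $1-\phi = -\phi^{-1}$ matching the geometric weight in the answer, and together with the boundary term $(1+\phi)^{-1} = \phi^{-2}$ the series collapses to $R(q) = \frac{P(q)}{1-q}\bigl[\sum_{n\ge 1}\frac{\phi^{-n}}{1+\phi q^n} - \phi^{-1}\bigr]$. Substituting back and simplifying with $1 - \phi^{-1} = \phi^{-2}$ then recovers the stated identity. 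The main obstacle I expect is recognizing that Heine's transformation applies here at all — a coincidence made possible by $(q^2;q^2)_n = (q;q)_n(-q;q)_n$ — and choosing the parameters so that the transformed series still telescopes cleanly against $\phi^{-n}$; once that is in hand the remainder is bookkeeping with the Fibonacci identities for $\phi$.
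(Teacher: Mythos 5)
Your proposal is correct --- I checked each step (the telescoping identity $\frac{q^n}{1-q^{2n}}A_{n-1}=A_n-A_{n-1}$, the Abel summation giving $(1-q)R(q)=S(q)-P(q)+2$, the Heine-transform of $R(q)$, and the final partial-fraction/Abel step) and they all go through, landing exactly on the stated identity after the substitution $1-\phi^{-1}=\phi^{-2}=\frac{3-\sqrt5}{2}$. However, your route is genuinely different from the paper's. The paper works directly with $S(q)$ itself: writing $1+\frac{q^k}{1-q^{2k}}=\frac{(1-\phi^{-1}q^k)(1+\phi q^k)}{(1-q^k)(1+q^k)}$ and using $(1-\phi^{-1})(1+\phi)=1$ to absorb the $k=0$ factors, it recognizes $S(q)+2$ as $2\,{}_2\phi_1(\phi^{-1},-\phi;-1;q,q)$ and applies Heine's \emph{second} transformation (Gasper--Rahman (1.4.5)); the choice of parameters makes $abz/c=q$, so the transformed numerator $(q;q)_n$ cancels against the denominator and the series collapses in one stroke to $\sum_{n\ge0}\frac{(-\phi;q)_n}{(-\phi q;q)_n}\phi^{-n}$, which telescopes immediately. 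Your version instead first transplants the problem to the partial-product series $R(q)=\sum_{n\ge0}q^nA_n$ via summation by parts, applies Heine's \emph{first} transformation there, and then needs a partial-fraction decomposition plus a second Abel summation to finish. The paper's argument is shorter and the $-2$ appears transparently as the missing $n=0$ term, whereas in your approach it emerges from boundary terms of the two Abel summations; on the other hand, your computation of $R(q)$ is of some independent interest (it gives a closed form for the generating function of the partial products themselves), and it shows the identity can be reached with the first Heine transformation at the cost of extra elementary bookkeeping. Either way the analytic hypotheses are satisfied: for $|q|<1$ you have $|z|=|q|<1$ and $|b|=|\phi^{-1}q|<1$ as required for the first transformation, just as the paper has $|c|=1<|b|=\phi$ for the second.
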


Our proof relies on Heine's second transformation.

\begin{lemma}[{\cite[Eq.~(1.4.5)]{GR2004}}]
We have, for $|z|<1$ and $|c|<|b|$,
\begin{equation}
\sum_{n\ge 0}\frac{(a,b;q)_n}{(q,c;q)_n}z^n=\frac{(c/b,bz;q)_\infty}{(c,z;q)_\infty}\sum_{n\ge 0}\frac{(abz/c,b;q)_n}{(q,bz;q)_n}\left(\frac{c}{b}\right)^n.
\end{equation}
\end{lemma}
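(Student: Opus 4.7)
Using the golden-ratio identities $\phi^2=1+\phi$, $\psi^2=1+\psi$, $\phi+\psi=1$, $\phi\psi=-1$ (with $\phi=(1+\sqrt5)/2$, $\psi=(1-\sqrt5)/2$), the stated identity unpacks as
\[
\sum_{n\ge1}\pa(n)q^n+2 = P\,\Sigma,\qquad P:=\prod_{k\ge1}\frac{1+q^k-q^{2k}}{1-q^{2k}},\quad \Sigma:=\sum_{n\ge0}\frac{(-\psi)^n}{1+\phi q^n},
\]
since the $n=0$ term $1/(1+\phi)=\psi^2$ of $\Sigma$ absorbs the explicit $\psi^2$ in the statement, and $\psi^2\phi^2=1$ absorbs the factor $\phi^2$. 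My plan is to introduce the bridge series $K:={}_2\phi_1(-\phi,-\psi;-1;q,q)$ and prove separately that $2(K-1)=\sum_{n\ge1}\pa(n)q^n$ and $2K=P\Sigma$.

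For the first equality one expands the Pochhammer symbols. For $n\ge1$ the factorizations $(-\phi;q)_n=\phi^2(-\phi q;q)_{n-1}$, $(-\psi;q)_n=\psi^2(-\psi q;q)_{n-1}$, $(-1;q)_n=2(-q;q)_{n-1}$ (immediate from $1+\phi=\phi^2$, $1+\psi=\psi^2$, $1-(-1)=2$) combine with $\phi^2\psi^2=1$ and the identity $(q;q)_n(-q;q)_{n-1}=(1-q^n)(q^2;q^2)_{n-1}$ to collapse the $n$th term of $K$ to $\tfrac{1}{2}\cdot\tfrac{q^n}{1-q^n}\prod_{k=1}^{n-1}(1+q^k/(1-q^{2k}))$. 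Summing over $n\ge1$ and invoking Theorem 3.2 gives $K-1=\tfrac12\sum_{n\ge1}\pa(n)q^n$.

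For $2K=P\Sigma$, apply Heine's second transformation with $(a,b,c,z)=(-\psi,-\phi,-1,q)$; the hypothesis $|c|<|b|$ becomes $1<\phi$, which holds. Computing $c/b=-\psi$, $bz=-\phi q$, $abz/c=q$ yields
\[
K = \frac{(-\psi,-\phi q;q)_\infty}{(-1,q;q)_\infty}\sum_{n\ge0}\frac{(q,-\phi;q)_n}{(q,-\phi q;q)_n}(-\psi)^n.
\]
The $(q;q)_n$ factors cancel, and the identity $(-\phi;q)_n/(-\phi q;q)_n=(1+\phi)/(1+\phi q^n)=\phi^2/(1+\phi q^n)$, valid uniformly for $n\ge0$ (both sides equal $1$ at $n=0$), converts the sum to $\phi^2\Sigma$. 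So $K=\phi^2\Sigma\cdot(-\psi,-\phi q;q)_\infty/(-1,q;q)_\infty$, which rearranges to $P\Sigma=2K$ exactly when $P\,(-1,q;q)_\infty = 2\phi^2(-\psi,-\phi q;q)_\infty$. Writing $P=(-\phi q,-\psi q;q)_\infty/[(q;q)_\infty(-q;q)_\infty]$ and cancelling the common factors $(q;q)_\infty$ and $(-\phi q;q)_\infty$, this last identity reduces to the elementary pair $(-1;q)_\infty=2(-q;q)_\infty$ and $(-\psi;q)_\infty=\psi^2(-\psi q;q)_\infty$, together with $\phi^2\psi^2=1$.

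Combining $2K=P\Sigma$ with $2K=\sum_{n\ge1}\pa(n)q^n+2$ yields the theorem. The main obstacle is locating the bridge ${}_2\phi_1$ series $K$: one needs a single series whose term-by-term expansion reproduces (half of) the Theorem 3.2 generating function and whose Heine transform is precisely the Lambert-style series $\Sigma$. Once $K$ is written down, every remaining step is routine Pochhammer arithmetic combined with the quadratic identities satisfied by $\phi$ and $\psi$.
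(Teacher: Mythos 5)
You were asked to prove the Lemma itself---Heine's second transformation for ${}_2\phi_1$ series, which the paper simply quotes from Gasper and Rahman, Eq.~(1.4.5), without proof. Your proposal does not prove this identity. It instead proves Theorem \ref{th:gf1} (the golden-ratio rewriting of $\sum_{n\ge1}\pa(n)q^n$), and at the decisive step it explicitly ``appl[ies] Heine's second transformation with $(a,b,c,z)=(-\psi,-\phi,-1,q)$.'' As an argument for the Lemma this is circular: you invoke the general transformation as a black box and then verify consequences of it at a single special choice of parameters. Even if every Pochhammer manipulation in your write-up is correct (and they do appear to be, mirroring the paper's own proof of Theorem \ref{th:gf1} almost line for line), checking one specialization establishes nothing about the identity for general $a,b,c,z$ with $|z|<1$ and $|c|<|b|$, which is what the Lemma asserts.

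A genuine proof of the Lemma would proceed, for instance, by iterating Heine's first transformation
$${}_2\phi_1(a,b;c;q,z)=\frac{(b,az;q)_\infty}{(c,z;q)_\infty}\,{}_2\phi_1\!\left(c/b,z;az;q,b\right)$$
twice (the composition of the two parameter substitutions yields exactly the stated formula), or by expanding $(bz;q)_\infty/(z;q)_\infty$ via the $q$-binomial theorem and interchanging the order of summation, justifying absolute convergence under the hypotheses $|z|<1$ and $|c|<|b|$. None of this appears in your proposal. The material you did write belongs to the proof of Theorem \ref{th:gf1}, where the paper carries out essentially the same computation; as a proof of the Lemma it misses the target entirely.
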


\begin{proof}[Proof of Theorem \ref{th:gf1}]
Note that
$$1+\frac{q^k}{1-q^{2k}}=\frac{\left(1-\frac{\sqrt{5}-1}{2}q^k\right)\left(1+\frac{\sqrt{5}+1}{2}q^k\right)}{(1-q^k)(1+q^k)}$$
and
$$\left(1-\frac{\sqrt{5}-1}{2}\right)\left(1+\frac{\sqrt{5}+1}{2}\right)=1.$$

We have
\begin{align*}
\sum_{n\ge 1}\pa(n)q^n&=\sum_{n\ge 1}\frac{q^n}{1-q^{n}}\prod_{k=1}^{n-1}\left(1+\frac{q^k}{1-q^{2k}}\right)\\
&=\sum_{n\ge 1}\frac{q^n}{1-q^{n}}\prod_{k=1}^{n-1}\frac{\left(1-\frac{\sqrt{5}-1}{2}q^k\right)\left(1+\frac{\sqrt{5}+1}{2}q^k\right)}{(1-q^k)(1+q^k)}\\
&=2\sum_{n\ge 1}\frac{\left(\frac{\sqrt{5}-1}{2},-\frac{\sqrt{5}+1}{2};q\right)_n}{(q,-1;q)_n}q^n\\
&=2\sum_{n\ge 0}\frac{\left(\frac{\sqrt{5}-1}{2},-\frac{\sqrt{5}+1}{2};q\right)_n}{(q,-1;q)_n}q^n-2\\
&=2\ \frac{\left(\frac{\sqrt{5}-1}{2},-\frac{\sqrt{5}+1}{2}q;q\right)_\infty}{(-1,q;q)_\infty}\sum_{n\ge 0}\frac{\left(-\frac{\sqrt{5}+1}{2};q\right)_n}{\left(-\frac{\sqrt{5}+1}{2}q;q\right)_n}\left(\frac{\sqrt{5}-1}{2}\right)^n-2 \tag{by Heine's Second Tranformation}\\
&=\frac{3-\sqrt{5}}{2}\left(\prod_{k\ge 1}\frac{1+q^k-q^{2k}}{1-q^{2k}}\right)\left(1+\frac{3+\sqrt{5}}{2}\sum_{n\ge 1}\frac{\left(\frac{\sqrt{5}-1}{2}\right)^n}{1+\frac{\sqrt{5}+1}{2}q^n}\right)-2.
\end{align*}
\end{proof}

\subsection{Ingham's Tauberian theorem}

Ingham's Tauberian theorem is a powerful tool to determine asymptotic behaviors of certain weakly increasing nonnegative sequences. It states as follows

\begin{theorem}[Ingham \cite{Ing1941}]\label{th:ing}
Let $f(q)=\sum_{n\ge0}a(n) q^n$ be a power series with weakly increasing nonnegative coefficients and radius of convergence equal to $1$. If there are constants $A>0$ and $\lambda,\alpha\in\mathbb{R}$ such that
$$f\left(e^{-\epsilon}\right) \sim\lambda\epsilon^\alpha e^{\frac{A}{\epsilon}}$$
as $\epsilon\to 0^+$, then
$$a(n)\sim \frac{\lambda}{2\sqrt{\pi}}\frac{A^{\frac{\alpha}{2}+\frac14}}{n^{\frac{\alpha}{2}+\frac34}} e^{2\sqrt{An}}$$
as $n\to \infty$.
\end{theorem}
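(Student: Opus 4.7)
The plan is to run a standard Tauberian argument that turns the real-axis information about $f(e^{-\epsilon})$ near its singularity at $\epsilon = 0$ into pointwise asymptotics for the coefficients, with the weak monotonicity of $\{a(n)\}$ supplying the Tauberian hypothesis. This is exactly the theorem of Ingham \cite{Ing1941}, so in a finished write-up one simply cites it; the sketch below records the line of reasoning I would follow in giving a self-contained proof.

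First, I would identify the natural scale by a saddle-point heuristic. Minimising the exponent $A/\epsilon + n\epsilon$ appearing in $f(e^{-\epsilon}) e^{n\epsilon}$ gives $\epsilon_n := \sqrt{A/n}$, at which the hypothesis yields
$$f(e^{-\epsilon_n})\, e^{n\epsilon_n} \sim \lambda (A/n)^{\alpha/2} e^{2\sqrt{An}}.$$
This already supplies the growth $e^{2\sqrt{An}}$ and the power $A^{\alpha/2} n^{-\alpha/2}$. Taylor-expanding the combined exponent $2\sqrt{Ak} - k\epsilon_n$ of the conjectural summand $a(k) e^{-k\epsilon_n}$ around $k = n$ shows (heuristically) that the dominant mass of $\sum_k a(k) e^{-k\epsilon_n}$ sits in a Gaussian window of half-width of order $n^{3/4}/A^{1/4}$, and the corresponding Gaussian integral predicts precisely the missing prefactor $\tfrac{1}{2\sqrt{\pi}} A^{1/4} n^{-3/4}$.

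To upgrade this heuristic to rigorous two-sided bounds I would use monotonicity. The inequality $a(k) \ge a(n)$ for $k \ge n$ yields $f(e^{-\epsilon}) \ge a(n) e^{-n\epsilon}/(1 - e^{-\epsilon})$, so at $\epsilon = \epsilon_n$ we get $a(n) \le (1 - e^{-\epsilon_n}) e^{n\epsilon_n} f(e^{-\epsilon_n})$; this has the correct exponential but overshoots the target by a factor of order $n^{1/4}$, which is precisely the ratio of the Gaussian width $n^{3/4}$ to the coarser Laplace width $\epsilon_n^{-1} = \sqrt{n/A}$. The main obstacle is closing this slack and pinning down the exact constant $\tfrac{1}{2\sqrt{\pi}}$, since monotonicity alone only captures the shape of the saddle region up to multiplicative constants. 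The standard device is a smoothing/de-smoothing argument: smooth $\{a(n)\}$ by an averaging kernel of width of order $n^{3/4}$, deduce the target asymptotic for the smoothed sequence by a direct Abelian computation (where the Gaussian normalisation drops out of a textbook integral), and then use monotonicity to transfer the smoothed asymptotic back to $a(n)$ itself with vanishing error. This de-smoothing step is where the genuinely Tauberian content of Ingham's theorem sits.
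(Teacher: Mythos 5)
The paper offers no proof of this statement: it is quoted verbatim as a classical theorem of Ingham \cite{Ing1941}, so your primary recommendation --- cite it --- is exactly what the paper does. Your supplementary sketch is a faithful outline of the standard argument, and the heuristic computations all check out: the saddle at $\epsilon_n=\sqrt{A/n}$ produces the exponent $2\sqrt{An}$ and the factor $A^{\alpha/2}n^{-\alpha/2}$, the Gaussian window of width $\asymp A^{-1/4}n^{3/4}$ accounts for the remaining $\tfrac{1}{2\sqrt{\pi}}A^{1/4}n^{-3/4}$, and the one-sided monotonicity bound $a(n)\le(1-e^{-\epsilon_n})e^{n\epsilon_n}f(e^{-\epsilon_n})$ indeed overshoots by exactly $n^{1/4}$. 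Be aware, though, that as written this is not a proof: the smoothing/de-smoothing step, which you correctly identify as the genuinely Tauberian content, is only named and not executed, and that is where all the difficulty of Ingham's theorem lives. For the purposes of this paper, deferring to the citation is entirely appropriate.
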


\subsection{Proof of Theorem \ref{th:main}}

We first notice from Theorem \ref{th:inc} that $\{\pa(n)\}_{n\ge 1}$ is a strictly increasing positive sequence.

Let $PA(q):=\sum_{n\ge 1}\pa(n)q^n$. It remains to estimate $PA(e^{-\epsilon})$ as $\epsilon\to 0^+$. We recall two known results.

The first result tells us the asymptotic behavior of $(e^{-2\epsilon};e^{-2\epsilon})_\infty$. The modular inversion formula for Dedekind's eta-function (p.~121, Proposition 14 of \cite{Kob1984}) implies that as $\epsilon\to 0^+$
$$(e^{-\epsilon};e^{-\epsilon})_\infty\sim \sqrt{\frac{2\pi}{\epsilon}}e^{-\frac{\pi^2}{6\epsilon}}.$$

On the other hand, Auluck et al.~\cite{ASA1950} showed that as $\epsilon\to 0^+$
$$\prod_{k\ge 1}\Big(1+e^{-k\epsilon}-e^{-2k\epsilon}\Big)\sim e^{\frac{2}{\epsilon}\left(\log \frac{1+\sqrt{5}}{2}\right)^2}.$$
It is worth pointing out that in the original paper of Auluck et al., they did not use the number $2\left(\log \frac{1+\sqrt{5}}{2}\right)^2$ in the exponent. Instead, they used its decimal value $0.46313\cdots$. However, it is clear from their paper that the value is
$$\int_0^1 \frac{\log (1+x-x^2)}{x}dx,$$
which is indeed $2\left(\log \frac{1+\sqrt{5}}{2}\right)^2$.

At last, we have (with $q=e^{-\epsilon}$) as $\epsilon\to 0^+$
$$1+\frac{3+\sqrt{5}}{2}\sum_{n\ge 1}\frac{\left(\frac{\sqrt{5}-1}{2}\right)^n}{1+\frac{\sqrt{5}+1}{2}q^n}=\frac{3+\sqrt{5}}{2}+O(\epsilon).$$
Here $(3+\sqrt{5})/2$ comes from taking $q=1$.

Combining all these ingredients together, we conclude that
$$PA(e^{-\epsilon})\sim \sqrt{\frac{\epsilon}{\pi}}e^{\frac{1}{\epsilon}\left(\frac{\pi^2}{12}+2\left(\log \frac{1+\sqrt{5}}{2}\right)^2\right)}.$$

In the setting of Ingham's Tauberian theorem, we have
$$\lambda=\sqrt{\frac{1}{\pi}},\qquad \alpha=\frac{1}{2},\qquad A=\frac{\pi^2}{12}+2\left(\log \frac{1+\sqrt{5}}{2}\right)^2.$$
It therefore follows from Ingham's Tauberian theorem that
$$pa(n)\sim \frac{\sqrt{A}}{2\pi n}e^{2\sqrt{An}}$$
as $n\to\infty$.

\subsection{Further remarks}

Let $\pa_o(n)$ denote the number of unlimited parity alternating partitions of $n$ with the smallest part odd. We observe that the conjugate of such partitions are partitions in which all different parts appear an odd number (or zero) times and vice versa. Hence $\pa_o(n)$ has a more succient generating function.

\begin{theorem}
We have
\begin{equation}
1+\sum_{n\ge 1}\pa_o(n)q^n=\prod_{k\ge 1}\frac{1+q^k-q^{2k}}{1-q^{2k}}.
\end{equation}
\end{theorem}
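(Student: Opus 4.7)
The plan is to combine the conjugation bijection used in Section~3 with a short observation about the largest part of $\lambda$. Recall that conjugation gives a bijection between $\pac(n)$ and the set of partitions $\lambda$ of $n$ in which each part value below the maximum appears either $0$ or an odd number of times (the largest part itself being unrestricted). Under this bijection, the smallest part of $\bar\lambda$ equals the multiplicity of the largest part of $\lambda$, since $\bar\lambda_{\lambda_1}$ simply counts the number of $\lambda_j$ equal to $\lambda_1$.

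Requiring $\bar\lambda$ to have odd smallest part, i.e.~$\bar\lambda$ contributing to $\pa_o(n)$, therefore translates exactly into the condition that the multiplicity of the largest part of $\lambda$ be odd. Combining this with the existing restriction, one obtains a bijection between the set of unlimited parity alternating partitions of $n$ with odd smallest part and the set of partitions of $n$ in which \emph{every} distinct part appears an odd number of times, which is the combinatorial equivalence flagged in the paragraph preceding the theorem.

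With this characterization in hand, the generating function becomes a routine product expansion. For each potential part value $k\ge 1$, the contribution from allowing $0,1,3,5,\ldots$ copies is
\[
1+q^k+q^{3k}+q^{5k}+\cdots=1+\frac{q^k}{1-q^{2k}}=\frac{1+q^k-q^{2k}}{1-q^{2k}},
\]
and taking the product over all $k\ge 1$ yields the claimed identity, with the constant term $1$ on the left corresponding to the empty product. There is no substantial obstacle; the only point that needs care is the bookkeeping in the conjugation step, i.e.~verifying that the two conditions ``odd or zero multiplicity for every non-maximal part'' and ``odd multiplicity for the maximal part'' together describe precisely the partitions in which every distinct part has odd multiplicity.
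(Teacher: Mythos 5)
Your proof is correct and follows essentially the same route as the paper: the paper also proves this by observing that conjugation carries unlimited parity alternating partitions with odd smallest part to partitions in which every distinct part appears an odd number of times, and then writes down the product $\prod_{k\ge 1}\bigl(1+q^k+q^{3k}+\cdots\bigr)$. Your write-up actually supplies more detail than the paper does, in particular the verification that the odd-smallest-part condition on $\bar\lambda$ corresponds precisely to odd multiplicity of the largest part of $\lambda$.
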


We know from Auluck et al.~\cite{ASA1950} that $\pa_o(n)$ is also asymptotic to 
$$\frac{\sqrt{A}}{2\pi n}e^{2\sqrt{An}}\qquad(n\to\infty),$$
with $A$ defined in the previous section. This tells us that the partition set $\pac$ is dominated by partitions with the smallest part odd.

\subsection*{Acknowledgements}

I would like to thank George E. Andrews and Robert C. Vaughan for some helpful discussions.

\bibliographystyle{amsplain}

\end{document}